\documentclass[10pt]{article}
\font\smallit=cmti10

\usepackage{amssymb,latexsym,amsmath,epsfig,amsthm}

\usepackage{tikz}

\usepackage{color}

\makeatletter

\renewcommand\section{\@startsection {section}{1}{\z@}
{-30pt \@plus -1ex \@minus -.2ex}
{2.3ex \@plus.2ex}
{\normalfont\normalsize\bfseries\boldmath}}

\renewcommand\subsection{\@startsection{subsection}{2}{\z@}
{-3.25ex\@plus -1ex \@minus -.2ex}
{1.5ex \@plus .2ex}
{\normalfont\normalsize\bfseries\boldmath}}

\renewcommand{\@seccntformat}[1]{\csname the#1\endcsname. }

\makeatother

\newtheorem{theorem}{Theorem}

\newtheorem{corollary}{Corollary}

\theoremstyle{definition}

\usepackage{graphicx}
\usepackage{amssymb}
\usepackage{float}

\theoremstyle{remark}
\numberwithin{equation}{section}

\DeclareMathOperator{\li}{li}

\begin{document}

\begin{center}
\uppercase{\bf \boldmath Lines in the prime number graph}
\vskip 20pt

{\bf Scott~Duke~Kominers}\\
{\smallit Harvard University, Cambridge, Massachusetts, USA}\\
{\tt kominers@fas.harvard.edu}
\vskip10pt
{\bf Rudi~Mrazovi\'c}\\
{\smallit University of Zagreb, Zagreb, Croatia}\\
{\tt Rudi.Mrazovic@math.hr}
\vskip10pt
{\bf Carl~Pomerance}\\
{\smallit Dartmouth College, Hanover, New Hampshire, USA}\\
{\tt carlp@math.dartmouth.edu}
\vskip 10pt
{\bf Patrick~Sol\'e}\\
{\smallit I2M, (CNRS, Aix-Marseille University), Marseille, France}\\
{\tt patrick.sole@telecom-paris.fr}
\end{center}
\vskip 20pt

\centerline{\bf Abstract}
The prime number graph is the set of points $(n,p_n)$ where $p_n$ denotes the $n^{\text{th}}$ prime. Let $L(n)$ be the minimum number of straight lines needed to cover the first $n$ points in this set.
Let $B(n)$ be the largest number of points $(k,p_k)$ with $k\le n$ covered by a single line. 
Recently Sloane conjectured that $L(n) = O(n/\log n)$.  We prove a much stronger bound,
as well as upper and lower estimates for $B(n)$.  Our proofs use the Prime Number Theorem with remainder
and are considerably improved with the assumption of the Riemann Hypothesis.
\noindent
\vskip 20pt
{\small {\bf AMS Subject Classification:} 52C10, 11A41, 11N05.}

{\small {\bf Keywords:} Prime Number Theorem, prime number graph, awkward prime.}

\pagestyle{myheadings}
\thispagestyle{empty}
\baselineskip=12.875pt
\vskip 30pt
\section{Introduction}
Let $p_1,p_2,\dots$ denote the sequence of primes.
A {\bf prime point} is a point of the plane of the form $(k,p_k)$ for some $k.$ This  graphical representation of the primes was considered
in \cite{P}.  It is interesting to look at sets of these prime points that are collinear, such as
\[
(6, 13),\, (7, 17), \,(10, 29),\, (12, 37),\, (13, 41), \,(16, 53), \,(18, 61), \,(21,73)
\]
which are all on the line $y=4x-11$.
Let $L(n)$ be the minimum number of lines needed to cover the first $n$ prime points. 
For example, $L(2)=1$ and $L(3)=2.$

\begin{figure}[ht]
    \centering
    \begin{tikzpicture}
        [xscale=0.5, yscale=0.08, font=\small]
    
        % axes
        \draw[->] (0,0)--(24,0) node[below] {$n$};
        \draw[->] (0,0)--(0,89) node[left] {$p_n$};
    
        % the 5 covering lines
   %     \draw[thin]
    %        (1,{1+1})--(24,{24+1});     % p_n = n+1
            
        \draw[thin]
           (1,2)--(24,89);   %p_n = (87/23)n-41/23
               
        \draw[thin]
            (1,{2*1-1})--(24,{2*24-1}); % p_n = 2n-1
    
        \draw[thin]
            (2.5,1)--(24,{4*24-9}); % p_n = 4n-9
    
        \draw[thin]
            (3,1)--(24,{4*24-11}); % p_n = 4n-11
    
        \draw[thin]
            (3.5,1)--(24,{4*24-13}); % p_n = 4n-13
    
        % points (n, p_n), n = 1,...,24
        \foreach \n/\p in {
            1/2,
            2/3,
            3/5,
            4/7,
            5/11,
            6/13,
            7/17,
            8/19,
            9/23,
            10/29,
            11/31,
            12/37,
            13/41,
            14/43,
            15/47,
            16/53,
            17/59,
            18/61,
            19/67,
            20/71,
            21/73,
            22/79,
            23/83,
            24/89
        } {
            \draw[fill=black]
                ({\n},{\p}) circle[shift only, radius=2.0pt];
        }
    \end{tikzpicture}
    \caption{First $24$ points of the prime number graph covered by $L(24) = 5$ lines.}
\end{figure}

A prime $p_n$ is {\bf awkward} if $L(n)>L(n-1).$ These concepts were introduced in recent Numberphile videos \cite{S1}, \cite{S2}.
See \cite{O1} for numerics of $L(n)$ (called there $a(n)$)  for small $n$, and see  \cite{O2} for the list of the first awkward primes.

In this note we study how the function $L(n)$ behaves for large $n$.
 Since the primes have asymptotic
density 0, it is clear that no line can contain infinitely many prime points $(k,p_k)$.
We introduce the function $B(n)$ which is the largest number of prime points among the first $n$ of them   covered by a single line, and derive  upper and lower bounds for it.  Our arguments for $L(n)$ and
$B(n)$ are based on the Prime Number Theorem with remainder; we therefore also obtain a direct strengthening if we assume the Riemann Hypothesis.

The material is arranged as follows. The next section recalls some known results on the topic of the prime number graph and the Prime Number Theorem with remainder. Sections~\ref{sec-L(n)} and~\ref{sec-B(n)} study the functions $L(n)$ and $B(n)$, respectively.  Section~\ref{sec-conclusion} presents concluding remarks and underlines some challenging open problems.

\section{Background results} \label{sec-background}

We begin by recording an observation that is originally due to Erd\H{o}s and quoted without proof in~\cite{P}.

\begin{theorem}
\label{th-oldtheorem}
 For any positive integer $k$, almost all prime points $(n,p_n)$ lie on a line with $k$ other prime points.
 That is, the set of primes for which this is so has relative density $1$ in the set of primes.
\end{theorem}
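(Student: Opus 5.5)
The plan is to look only at lines through consecutive prime points that have the same slope as the prime gap. Fix $k$ and a large parameter $x$, and consider primes $p_n\le x$.

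\emph{Step 1: reduce to equal consecutive gaps.} If the $k+1$ consecutive gaps
\[
p_{n+1}-p_n,\ p_{n+2}-p_{n+1},\ \dots,\ p_{n+k+1}-p_{n+k}
\]
are all equal to a common value $d$, then the points $(n,p_n),(n+1,p_{n+1}),\dots,(n+k+1,p_{n+k+1})$ all lie on the line $y=d(x-n)+p_n$. This gives $k+2$ collinear prime points, which is more than enough. However, long runs of equal consecutive gaps are rare, and their existence is not even known unconditionally. So this naive idea fails, and a more flexible construction is needed.

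\emph{Step 2: a pigeonhole construction.} For each $n$, consider the slopes of the segments joining $(n,p_n)$ to $(n+j,p_{n+j})$ for $1\le j\le H$, where $H$ is a fixed large integer depending on $k$. Each slope is $(p_{n+j}-p_n)/j$. Almost all gaps are $O(\log x)$: the number of $n\le\pi(x)$ with $p_{n+H}-p_n> C H\log x$ is at most about $\pi(x)/C$ by summing gaps. So for all but a proportion $O(1/C)$ of the $n$, the $H$ slopes are rationals with denominator at most $H$ and size at most $CH\log x$. The number of such rationals is about $CH^3\log x$, which is much larger than $H$, so pigeonhole does not force a repeat for a single $n$. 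I would therefore change approach: fix the slope and vary the intercept.

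\emph{Step 3: fix the slope, vary the intercept.} Fix an even integer $m$ and look at the quantity $p_j-mj$ for $j\le\pi(x)$. The prime point $(n,p_n)$ lies on a line with $k$ other prime points of slope $m$ exactly when the value $c=p_n-mn$ is attained at least $k+1$ times. By the prime number theorem, $p_j-mj$ is roughly $j(\log j-m)$. It varies slowly on scales where the local average gap is near $m$, that is, near $j\approx e^m$. This suggests letting the slope grow with $x$: take $m$ ranging over integers near $\log x$.

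For $j$ in a window where the average gap is about $m$, the values $p_j-mj$ form a random-walk-like sequence with steps $p_{j+1}-p_j-m$ of mean roughly $0$ and size $O(\log x)$ typically. Over a block of $N$ consecutive indices, these values occupy a range of size about $\sqrt N\log x$ heuristically. Unconditionally, this range is at most about $N\log x$ from the gap bound. Hence some values must repeat many times once $N$ exceeds the size of the range.

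To make this rigorous, I would use counting: $N$ values lying in an interval of length $R$ with $N\gg kR$ force at least $N-kR$ indices whose value is shared with at least $k$ others. The hope is that the range of $p_j-mj$ over a block is $o(N)$, so that all but $o(N)$ points of the block are covered. The crude bound $R\le N\log x$ fails. Instead I would compare with the prime number theorem with remainder: on the block, $p_j-mj=f(j)+E(j)$, where $f$ is smooth with derivative $\log j-m\approx0$ and $E$ is the error term. Choosing blocks of length $N$ so that $f$ varies by $o(N)$ requires $N\cdot|\log j-m|=o(N)$. That holds when $m$ is the integer nearest to $\log j$ only if $|\log j-m|=o(1)$, which fails for most $j$.

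\emph{Step 4: allow rational slopes.} To fix this, I would use rational slopes $m=a/b$ with $b$ bounded, so that $m$ approximates $\log j$ within $1/b$. I would take $b\to\infty$ slowly, and count values of $b p_j-aj$, which lie in an interval of length $R$.

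\emph{Main obstacle.} The hard step is bounding the range $R$ of $p_j-mj$ over a block of length $N$ by $o(N/k)$. Since $p_j$ is only known to within $O(j e^{-c\sqrt{\log j}})$ of its smooth approximation, this forces the blocks to be long. Long blocks, in turn, make the curvature term $N^2/j$ matter. I would try to balance $N\approx j^{1/2}$-ish scales, and expect to need a short-interval form of the prime number theorem, or an averaged (Selberg-type) version, to control the range for almost all blocks.
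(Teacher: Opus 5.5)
Your Step~4 has the right mechanism, and it is in fact the paper's. There, each piece of $(e^k,e^{k+1}]$ cut out by a Farey dissection of level $Q$ is covered by the lines $(bk+a)y-bx=C$ in the inverse graph, i.e.\ by the level sets of the integer $bp_n-(bk+a)n$, and \eqref{eq-lipnt} shows that few such values occur. But your proposal stops exactly at the step that constitutes the proof: bounding the range of $bp_j-aj$ on a block by $o(N)$. The set-up you give would not deliver it, for three reasons.

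First, the local slope of the prime graph at index $j$ is $\log p_j\approx\log j+\log\log j$ (with $g=\li^{-1}$, $g'(j)=\log g(j)$), not $\log j$. Second, approximating the slope ``within $1/b$'' is not enough. Over $N$ indices, $bp_i-ai$ drifts by about $N\,b\,|\log p_j-a/b|$, so you need $b|\log p_j-a/b|=o(1)$. For this the denominator must vary from block to block via Dirichlet's theorem ($b\le Q$, $|\log p_j-a/b|\le1/(bQ)$, $Q\to\infty$), which is precisely the role of the Farey dissection in the paper. With one slowly growing $b$ and error of order $1/b$, a positive proportion of blocks have $\asymp N$ possible values for $N$ points, and the pigeonhole forces nothing. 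Third, the resolution you suggest (blocks with $N\approx j^{1/2}$ plus short-interval or Selberg-type input) goes the wrong way. At that scale the pointwise remainder, even on RH, exceeds $N$, which is what pushes you toward almost-all short-interval results; that route is far heavier and not needed.

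The obstacle disappears with long blocks. Write $|p_i-g(i)|\le i\,\epsilon(i)$ with $\epsilon(t)\downarrow0$, which is a consequence of \eqref{eq-lipnt}. Set $Q=\lfloor\epsilon(j)^{-1/4}\rfloor$, pick $a/b$ by Dirichlet for $g'(j)$, and take the block $[j,j+N)$ with $N=\lfloor j\,\epsilon(j)^{1/2}\rfloor$. Since $g''(t)\ll1/t$, every $i$ in the block satisfies
\[
\bigl|(bp_i-ai)-(bp_j-aj)\bigr|\ \ll\ \frac NQ+\frac{bN^2}{j}+b\,j\,\epsilon(j)\ \ll\ N\,\epsilon(j)^{1/4}=o(N).
\]
So the $N$ integers $bp_i-ai$ take $o(N)$ distinct values, and all but $k\cdot o(N)$ indices share their value, hence a line of slope $a/b$, with at least $k$ others. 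Summing over blocks gives the theorem. The curvature term only needs $bN=o(j)$ and the remainder only needs $bj\,\epsilon(j)=o(N)$; these are compatible because $Q^2\epsilon(j)\to0$. This is the qualitative form of the paper's balancing of $Q$ against $w$. The paper additionally counts the lines, obtaining Theorem~\ref{th-L(n)} and thus the stronger Theorem~\ref{th-newversion}, from which the statement follows.
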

 From there, we deduce an asymptotic upper bound on $L(n).$

\begin{corollary}
\label{cor-oldtheorem} 
 For $n \to \infty$ we have $L(n)=o(n).$ 
\end{corollary}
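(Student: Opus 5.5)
We derive Corollary~\ref{cor-oldtheorem} from Theorem~\ref{th-oldtheorem} by a greedy covering argument. The idea is that when almost every prime point lies on a line with many other prime points, covering those points with such lines is cheap: each line accounts for many points at once. The only subtlety is that the $k$ companions guaranteed by Theorem~\ref{th-oldtheorem} might have indices larger than $n$. We handle this by taking slightly more than $n$ points and using the density statement to control the points we lose.

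Fix $k\ge 1$ and $\varepsilon>0$. Let $E$ be the set of indices $m$ such that $(m,p_m)$ does \emph{not} lie on a line with $k$ other prime points. By Theorem~\ref{th-oldtheorem}, $E$ has density $0$ in the positive integers; relative density in the primes is the same as density in the indices. For $m\notin E$, fix a line $\ell_m$ through $(m,p_m)$ and $k$ other prime points.

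The key obstacle is that these $k$ other points may have arbitrarily large indices, so $\ell_m$ might cover few points among the first $n$. To deal with this, define $M(m)$ as the largest index among the $k+1$ prime points on $\ell_m$, which is finite for each $m$. The set $E_T=E\cup\{m: M(m)>Tm\}$ is a union over $T$-thresholds that decrease in $T$, and their intersection over $T$ is $E$. I would argue as follows: for each fixed $T$, the set $E_T\setminus E$ need not have small density a priori. So instead of counting by density, I would proceed through a cleaner counting route.

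Consider the lines $\ell_m$ for $m\notin E$ and $m\le n$, and split them into two classes.
\begin{itemize}
\item If $\ell_m$ contains at least $k+1$ prime points with index $\le n$, call it \emph{good}.
\item Otherwise $\ell_m$ has a companion with index $>n$.
\end{itemize}

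Distinct lines meet in at most one point. A greedy selection of good lines, each covering at least $k+1$ of the first $n$ points, therefore uses at most $n/(k+1)$ lines to cover all points whose line is good.

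For the remaining points, apply Theorem~\ref{th-oldtheorem} with the index range $[1,N]$ for a large $N=N(n)$. The points $m\le n$ whose line is not good within $[1,N]$ form a set $F_N\subseteq[1,n]$. These sets decrease as $N\to\infty$ and their intersection is $E\cap[1,n]$. Choosing $N$ large enough makes $F_N=E\cap[1,n]$, which has size $o(n)$.

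Now cover the first $N$ points with at most $N/(k+1)+|E\cap[1,N]|$ lines. This family also covers the first $n$ points, so the cost depends on $N$, which is bad. The genuinely hard step is therefore bounding $N$ in terms of $n$.

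I would fix this by first choosing $T$ depending on $\varepsilon$. For each $m\notin E$ the index $M(m)$ is finite, so the set $\{m: M(m)>Tm\}$ shrinks as $T$ grows. To make this uniform in density, I would revisit the proof of Theorem~\ref{th-oldtheorem}. That proof (via the $k$-term arithmetic-progression structure in the differences of primes, Erd\H{o}s's argument) in fact produces companions within a bounded factor of $m$. With such a $T$, the bound becomes $L(n)\le Tn/(k+1)+o(n)$, and letting $k\to\infty$ gives $L(n)=o(n)$.

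This uniformity in the index of the companions, obtained from the proof of Theorem~\ref{th-oldtheorem}, is the main obstacle in the argument.
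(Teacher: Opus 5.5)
\textbf{There is a genuine gap.} The step everything rests on is asserted, not proved.

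You correctly note that Theorem~\ref{th-oldtheorem} says nothing about where the $k$ companions of $(m,p_m)$ lie. In fact its conclusion alone cannot give $L(n)=o(n)$. For a general increasing integer sequence, build stages, each a block of points on a convex parabola much longer than everything before it, followed by $k_j\to\infty$ companions on a line through each point of that block. Then for every $k$ almost every point lies on a line with $k$ others. Yet $L(n)\ge(\frac12-o(1))n$ at the end of each parabola block, since a line meets the block in at most two points.

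You resolve this by claiming that the proof of Theorem~\ref{th-oldtheorem} ``produces companions within a bounded factor of $m$.'' You give no argument for this, and the mechanism you cite is not the paper's. The paper quotes Theorem~\ref{th-oldtheorem} without proof and gets its quantitative form (Theorem~\ref{th-newversion}) only as a consequence of the bound on $L(n)$.

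Moreover, even granting $M(m)\le Tm$, the bound $L(n)\le Tn/(k+1)+o(n)$ does not follow. It needs the lines $\ell_m$, $m\le n$, to be essentially disjoint on prime points. Distinct lines meet in at most one point, but they may meet, so your greedy count (each good line has $k+1$ points, hence at most $n/(k+1)$ lines) is a non sequitur. For instance, each $m\in(n/T,n]$ could be the only one of the first $n$ points on $\ell_m$, with its companions in $(n,Tn]$ shared by many such lines. Then your family gives nothing better than about $(1-1/T)n$ lines.

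The missing ingredient is a covering construction that controls locality and overlap at once. That is exactly what Theorem~\ref{th-L(n)} provides, and Corollary~\ref{cor-oldtheorem} is immediate from it. The paper dissects $(e^k,e^{k+1}]$ by the Farey fractions of level $Q\asymp(e^k/(R(e^{k+1})k^2))^{1/4}$. By \eqref{eq-pnt}, the inverse prime points $(p_n,n)$ with $p_n$ in a piece $(e^{k+a/b},e^{k+a'/b'}]$ lie in a parallelogram of slope $b/(bk+a)$ (or $b'/(b'k+a')$) and width $w\asymp e^k/((bb')^2k^2)$. Its lattice points lie on $\ll (e^k/k)\min\{b,b'\}/(bb')^2$ parallel lines. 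Summing over the dissection gives $O(e^k/(kQ))$ lines for all of $(e^k,e^{k+1}]$, hence $L(n)=O(n^{3/4}R(n)^{1/4}/(\log n)^{1/2})$, which is $o(n)$ for $R$ as in \eqref{eq-lipnt}. Because the lines are counted directly, being parallel within a piece and each used for only one short interval, no ``many points per line'' argument and no subcover selection is needed.
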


In the sequel, we present a detailed proof of a more quantitative version of Theorem~\ref{th-oldtheorem} and Corollary~\ref{cor-oldtheorem}. We also derive a lower bound for $B(n)$, using a proof strategy strongly based on the proof of
\cite[Theorem 4.1]{P}.

We note that based on the experimental data in \cite{O1}, Sloane conjectured in \cite{S1} that $L(n)=O(\frac{n}{\log n}).$ This in particular motivates Theorem~\ref{th-L(n)} of the next section, where we in fact obtain a bound stronger than Sloane's conjecture.

As mentioned in the Introduction, our proofs strongly use the Prime Number Theorem with
remainder.  In particular, let
\[
\li(x)=\int_0^x\frac{dt}{\log t}
\]
denote the logarithmic integral function (where the principal value is taken for the singularity
at $t=1$).  Then $\li(x)\sim x/\log x\sim\pi(x)$ as $x\to\infty$, but the $\li(x)$ approximation
to $\pi(x)$ is much more accurate.  In particular, we have
\begin{equation}
\label{eq-lipnt}
|\pi(x)-\li(x)|\le x/\exp(c(\log x)^{3/5}(\log\log x)^{-1/5})
\end{equation}
for a positive constant $c$ and all large $x$.  If the Riemann Hypothesis is assumed, then
\eqref{eq-lipnt} improves to
\[
|\pi(x)-\li(x)|\le x^{1/2}\log x
\]
for all $x\ge2$. (See \cite{K} and \cite{MV}; and see also \cite{PT} for
an asymptotically weaker, but numerically explicit version of \eqref{eq-lipnt}.)
In general, let $R(x)$ be any smooth function with $R'>0$ and $R''<0$ and
\begin{equation}
\label{eq-pnt}
|\pi(x)-\li(x)|\le R(x)
\end{equation}
for all sufficiently large values of $x$.  Our subsequent results are all stated in
terms of $R(x)$.
In fact, our results hold for any increasing integer sequence
whose counting function satisfies \eqref{eq-pnt}.

\section{The quantitative Erd\H{o}s observation and awkward primes}\label{sec-L(n)}

In this section we give a quantitative proof of Theorem \ref{th-oldtheorem}.  In particular, we will prove the following.
\begin{theorem}
\label{th-newversion}
Let $N(x)$ denote the number of $n\le x$ such that
 $(n,p_n)$ lies on a line with at least
$(n/R(n))^{1/4}$  prime points in total. Then $N(x)\sim x$ as $x\to\infty$.
\end{theorem}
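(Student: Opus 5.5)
The plan is a pigeonhole argument on short blocks of consecutive indices, combined with a local "almost linear" description of $p_k$ that comes from \eqref{eq-pnt}. Fix a large $x$ and work with $n$ in a dyadic range $(x/2,x]$; summing over dyadic ranges then gives $N(x)\sim x$. Write $f$ for the inverse function of $\li$. By \eqref{eq-pnt}, $|p_k - f(k)| \ll R(p_k)\log p_k$, since $f'(k)\approx\log p_k$ converts an error of $R$ in counting into an error in position. Set $E=R(x\log x)\log x$, so that $|p_k-f(k)|\le E$ for all $k\le x$ up to constants.

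Split $(x/2,x]$ into consecutive blocks of length $H$, where $H$ is a parameter to be chosen. On a block starting at $m$ we have $f(m+j)=f(m)+jf'(m)+O(j^2 f''(m))$ with $f''\asymp 1/m$. Hence for $0\le j<H$ the quantity $p_{m+j}-f(m)-jf'(m)$ equals $O(E+H^2/x)$. The points $(m+j,p_{m+j})$ therefore lie in a thin parallelogram of vertical width $W\asymp E+H^2/x$ around the tangent line. The key counting step is to look at the gaps $g_j=p_{m+j+1}-p_{m+j}$. These are even integers (for $m>1$), and their average over the block is about $\log x$.

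Fix a base point $k$ in the block. The lines through $(k,p_k)$ and the other points $(k+j,p_k+d)$ of the block are lines through $(k,p_k)$ with rational slope $d/j$. Using $|p_{k+j}-p_k-jf'(m)|\le 2W$, the slope is within $2W/j$ of $f'(m)$. Count the slopes $s=a/b$ with $b\le H$ and $|s-f'(m)|\le 2W/b$. For the $j\ge H/2$ points of the block this confines the slope to an interval of length $\asymp W/H$, and the number of rationals there with denominator $\le H$ is $\ll (W/H)H^2+1=WH+1$ (by Farey spacing). Pigeonhole then says some line through $(k,p_k)$ contains $\gg H/(WH+1)$ block points.

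That bound is only useful when $W\lesssim 1$, which is false. I would therefore replace it with a pair-counting argument. Count quadruples $(k,k+j,k',k'+j')$ in the block whose slopes coincide. Better still, follow Erd\H{o}s directly: consider lines $y=ax-b$ with integer slope $a$ near $\log x$ and count the solutions of $p_k=ak-b$. For fixed $a$, the values $p_k-ak$ for $k$ in a block range over an interval of length $\ll W+H|f'(m)-a|$. Choosing $a=\lfloor f'(m)\rfloor$, or a nearby integer $a$ with $|f'(m)-a|\ll 1$, this length is $\ll W+H$. So $H$ values of $k$ land in $O(W+H)$ residues $b$, and the gain must instead come from taking $H$ much larger than $W$.

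The main obstacle is making this pigeonhole give many points per line while leaving few $k$ uncovered. The plan is to group $k$ by the value $b_k=ak-p_k$. The sum over $b$ of the number of $k$ with $b_k=b$ is $H$, spread over at most $T\ll W+H/\log x\cdot(\ldots)$ values of $b$. Indices whose $b$-class has fewer than $M$ members number at most $MT$. To make $MT=o(H)$ with $M=(x/R(x))^{1/4}$, I would optimize $H$ against $W\asymp E+H^2/x$ and exploit the fact that $f'$ is nearly constant: allowing the slope $a$ to vary over rationals $a/q$ with small $q$ gives more freedom, and the resulting number of classes is roughly $\sqrt{WH}$. Balancing $H\approx (x E)^{1/2}$ and checking that the exponent $1/4$ emerges from this optimization is the step I expect to be delicate. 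Once every block has all but $o(H)$ indices lying in classes of size at least $(n/R(n))^{1/4}$, summing over blocks gives $N(x)=x-o(x)$.
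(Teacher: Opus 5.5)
Your first two attempts (lines through a fixed base point, and integer slopes $a$) are dead ends. With an integer slope the term $H|f'(m)-a|$ is generically of size $H$, so enlarging $H$ gains nothing. The idea that survives is in your last paragraph, and it is the paper's mechanism. You approximate the local slope by a fraction of bounded denominator and use that the lattice lines $qy-ax=C$ of slope $a/q$ are $1/q$ apart vertically. The paper works in the inverse plane with a Farey dissection of $\log p$ on $(e^k,e^{k+1}]$, so each piece gets an exactly matched slope $b/(bk+a)$. It proves Theorem~\ref{th-L(n)}, then uses precisely your final count: lines with fewer than $M$ points cover at most $M$ times the number of lines. However, you only assert the class count ``roughly $\sqrt{WH}$''; it needs a derivation. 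By Dirichlet, choose $q\le Q_0$ with $|f'(m)-a/q|\le 1/(qQ_0)$. Then the integers $qp_{m+j}-a(m+j)$, $0\le j<H$, lie in an interval of length at most $qW+2H/Q_0$. So the block is covered by at most $Q_0W+2H/Q_0+1\ll\sqrt{HW}$ lines when $Q_0\asymp\sqrt{H/W}$.

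The genuine gap is the step you flag as delicate: it does not go through. With $W\asymp E+H^2/x$ the best choice is $H\asymp\sqrt{xE}$. This gives $\asymp x^{1/4}E^{3/4}$ lines per block, i.e.\ average class size $\asymp (x/E)^{1/4}$, and no choice of $H$ and $Q_0$ in this scheme does better. So $MT=o(H)$ requires $M=o\big((x/E)^{1/4}\big)$. But $E\asymp R(x\log x)\log x\ge R(x)\log x$, hence $(x/E)^{1/4}\ll (x/R(x))^{1/4}(\log x)^{-1/4}$. Thus the required $M=(x/R(x))^{1/4}$ exceeds the average class size by a factor $\gg(\log x)^{1/4}$; under RH the comparison is $x^{1/8}(\log x)^{-1/4}$ against $x^{1/8}(\log x)^{-5/8}$. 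You therefore cannot conclude that all but $o(H)$ indices of a block lie in classes of size at least $(n/R(n))^{1/4}$.

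You should also know that the paper's Farey argument has identical numerology. A piece with neighbours $a/b,a'/b'$ contains $\asymp u/(bb'\log u)$ primes and uses $\asymp u/(bb'\max(b,b')\log u)$ lines. That is $\asymp\max(b,b')\asymp Q\asymp (X/R(X))^{1/4}(\log X)^{-1/2}$ points per line at prime height $X$, exactly your $(x/E)^{1/4}$. The threshold $(n/R(n))^{1/4}$ enters only through the last step of the proof of Theorem~\ref{th-L(n)}. That step bounds the lines covering all $(p_j,j)$ with $p_j\le e^K$, then states the bound for $j\le e^K$. Once $p_n\asymp n\log n$ is taken into account, both routes give $N(x)\sim x$ for thresholds $M(n)=o\big((p_n/R(p_n))^{1/4}(\log p_n)^{-1/2}\big)$. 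That is the statement your block argument can actually prove.
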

We prove Theorem \ref{th-newversion} as a corollary of the following result.
\begin{theorem}
\label{th-L(n)}
We have $L(n)=O(n^{3/4}R(n)^{1/4}/(\log n)^{1/2})$.
\end{theorem}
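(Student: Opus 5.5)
The plan is to cover the prime points block by block, using lines of a common rational slope within each block, and then to balance three parameters: the block length $H$, a Dirichlet parameter $Q$, and the error $R$ from \eqref{eq-pnt}.

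\textbf{Step 1 (blocks and local linearization).} Split the indices $k\le n$ into about $n/H$ consecutive blocks $I=[K,K+H)$; the initial range $k\le n/\log n$ can be covered trivially by $O(n/\log n)$ lines. Inverting \eqref{eq-pnt} gives $p_k=g(k)+E_k$, where $g$ is the inverse function of $\li$ and $|E_k|\ll R(p_k)\log p_k$. The function $g$ satisfies
\[
g'(t)=\log g(t)\approx\log t,\qquad g''(t)=\frac{1}{t}\,(1+o(1)).
\]
So on a block, Taylor expansion about $K$ with $\lambda=g'(K)$ gives
\[
p_k = g(K)+\lambda(k-K)+O\!\left(\frac{H^2}{K}+R(n\log n)\log n\right).
\]
The hypotheses $R'>0$, $R''<0$ let us replace $R(p_k)$ by a bound in terms of $R$ at a point comparable to $n$.

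\textbf{Step 2 (rational slopes).} By Dirichlet's theorem, choose $a/q$ with $1\le q\le Q$ and $|\lambda-a/q|\le 1/(qQ)$. Every prime point in the block lies on a line $y=\frac aq x+b$, where $b=p_k-\frac aq k\in\frac1q\mathbb Z$. Over the block, these values of $b$ lie in an interval of length
\[
O\!\left(\frac{H}{qQ}+\frac{H^2}{n}+R\log n\right).
\]
Hence the number of lines needed for the block is
\[
\ll 1+\frac{H}{Q}+\frac{qH^2}{n}+qR\log n\;\le\; 1+\frac HQ+\frac{QH^2}{n}+QR\log n .
\]

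\textbf{Step 3 (summing and optimizing).} Summing over the $n/H$ blocks gives
\[
L(n)\ll \frac nH+\frac nQ+QH+\frac{QnR\log n}{H}.
\]
Take $H\asymp\sqrt{nR\log n}$ to balance the last two terms. Then take $Q$ to balance $n/Q$ against $Q\sqrt{nR\log n}$, namely $Q\asymp n^{1/4}(R\log n)^{-1/4}$. This yields a bound of the shape $n^{3/4}R^{1/4}$ times a power of $\log n$.

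\textbf{Main obstacle.} The hard part is getting the exact power $(\log n)^{-1/2}$ claimed in the statement, with $R(n)$ rather than $R(n\log n)$. As written, Step 3 loses logarithms in three places. First, the error in $p_k$ is $R(p_k)\log p_k$, evaluated at $p_k\approx n\log n$. Second, the curvature is $1/k$ on the $k$-scale. Third, the density of primes near $p_k$ is $1/\log n$, so a line of slope about $\log n$ could, in principle, meet the points more efficiently.

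I would first try to redo the count on the $p$-axis instead of the $k$-axis. A line $y=sx+b$ contains a point iff $k=(p_k-b)/s$, i.e.\ iff $\pi(p)=(p-b)/s$ for a prime $p$. The set of offsets $b$ attained is then governed by $\pi(p)-\li(p)$ and the curvature of $\li$. That curvature is $1/(p\log^2 p)$, which may recover the missing logarithms when the parameters are rescaled to $H\asymp\sqrt{nR}$ measured in primes of size $p\approx n\log n$.

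If the logarithms still do not come out right, the fallback is to check that the weaker log power still proves Theorem~\ref{th-newversion}. The point is that $(n/R(n))^{1/4}$ only requires the average number of points per line to be roughly $n/L(n)$.
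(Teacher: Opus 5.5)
Your scheme is essentially the paper's. You use one rational slope with denominator at most $Q$ per block, blocks as long as the curvature of the main term allows given the error in \eqref{eq-pnt}, and $Q$ of size $(\text{scale}/\text{error})^{1/4}$. The paper chooses its slopes through a Farey dissection of $\log p$ on $(e^k,e^{k+1}]$, so that the slope on each arc is exactly $\log u=k+a/b$. This is tidier but gains nothing in order of magnitude: on an arc with denominators $b,b'$ it spends $\asymp 1/\max\{b,b'\}\asymp 1/Q$ lines per prime (as $b+b'>Q$), just as your worst case $q=Q$ does.

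\textbf{The gap.} The genuine gap is the one you flag, and it cannot be closed the way you suggest. By AM--GM, your expression $n/Q+QH+QnR\log n/H$ (with $R=R(n\log n)$) is never below a constant times $n^{3/4}(R\log n)^{1/4}$. So, after the repairs below, the method yields $L(n)\ll n^{3/4}(\log n)^{1/4}R(n\log n)^{1/4}$. This falls short of the statement by a factor of at least $(\log n)^{3/4}$. Passing to the $p$-axis is pure bookkeeping: $(k,p)\mapsto(p,k)$ is unimodular and maps lines to lines, so no covering becomes cheaper. Indeed, the paper's $p$-axis count is $O(e^{3k/4}R(e^k)^{1/4}k^{-1/2})$ lines for the primes in $(e^k,e^{k+1}]$. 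Summed, that is $O(x^{3/4}R(x)^{1/4}(\log x)^{-1/2})$ lines for the roughly $x/\log x$ prime points with $p\le x$. Taking $x\asymp p_n\sim n\log n$ gives exactly your bound.

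The exponent $-1/2$ and the argument $R(n)$ enter only in the paper's last two sentences. There the lines built for the primes up to $e^K$ are treated as covering all indices $n\le e^K$; that is, $n$ is identified with $p_n$. What this argument really proves, yours or the paper's, is $L(n)=O\bigl(p_n^{3/4}R(p_n)^{1/4}/(\log p_n)^{1/2}\bigr)$. The paper's proof does not supply the missing $(\log n)^{3/4}$ either.

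\textbf{Other repairs.} Even for this weaker bound, two repairs are needed.
\begin{itemize}
\item Covering $k\le n/\log n$ trivially costs $n/\log n$ lines, which exceeds the bound for the $R$ actually in use. With \eqref{eq-lipnt}, $n^{3/4}R(n\log n)^{1/4}$ is $o(n/(\log n)^A)$ for every $A$; under RH it is $n^{7/8}$ times a power of $\log n$.
\item Your curvature estimate $H^2/K\le H^2/n$ is valid only for $K\asymp n$; near $K=n/\log n$ it is off by a factor $\log n$.
\end{itemize}
Both are cured by running the block argument separately on each dyadic range of $k$, with its own $H$ and $Q$, and summing the resulting geometric series. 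The paper does the same over the ranges $(e^k,e^{k+1}]$.

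Finally, your fallback does not rescue Theorem~\ref{th-newversion} as stated. With $T=(n/R(n))^{1/4}$, $T$ times your line count is at least $n(\log n)^{1/4}$. So the counting argument no longer shows that lines with fewer than $T$ points carry $o(n)$ points; you would need a threshold of $o\bigl((n/(R(n\log n)\log n))^{1/4}\bigr)$.
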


\begin{proof}
Let $Q$ be a large integer and
consider the Farey sequence of
level $Q$.  Let $a/b, a'/b'$ be two consecutive terms.  Then
\[
a'/b'-a/b = 1/bb',~~b,b'\le Q,~~b+b'>Q,~~\gcd(b,b')=1.
\]
Let $k$ be a large integer, and let
 $u=e^{k+a/b},\,u'=e^{k+a'/b'}$, with $I$ the interval $(u,u']$.  We consider \textbf{inverse prime points} $(p_n,n)$
with $p_n\in I$.   The length $|I|$ of $I$ has
\[
|I|=u'-u=e^{k+a'/b'}-e^{k+a/b}=e^{k+a/b}(e^{1/bb'}-1)=(1+o(1))u/bb', \quad Q\to\infty.
\]
Let $P$ denote the parallelogram bounded by the vertical lines $x=u,\, x=u'$
and the lines with slope $1/\log u=1/(k+a/b)$ through $(u, \li(u)-w)$ and $(u,\li(u)+w)$,
where
\[
w=\frac{|I|^2}{u\log^2u}=(1+o(1))\frac u{(bb')^2\log^2u}.
\]
These lines gain $|I|/\log u$ on the interval $I = (u,u']$.  This is about the same gain as
$\li(x)$ on the interval.  Note that by Taylor's theorem,
\[
\li(u')-\li(u)=\frac{|I|}{\log u}-\left(\frac12+o(1)\right)\frac{|I|^2}{u\log^2 u}=\frac{|I|}{\log u}-\left(\frac12+o(1)\right)w,
\]
and hence the region
\[
    |y-\li(x)|\le w/4,~~x\in I
\]
lies wholly in $P$.
For a given large number $k$ we want to choose $Q$ as large as possible so that the graph of $y=\pi(x)$ for $x\in I$ also lies in $P$.
Since $bb'<Q^2$, we have $w/4 \geq e^k / (8Q^4 k^2)$ for $k$ large, and so by \eqref{eq-pnt} this will be accomplished if we take
\[
Q=\left\lfloor \left(\frac{e^k}{8R(e^{k+1})k^2}\right)^{1/4}\right\rfloor.
\]

We can also do a similar construction by using $u'$ instead of $u$ to determine the slope.
Namely, let $P'$ be the parallelogram bounded by
$x=u,\,x=u'$ and the lines with slope $1/\log u'$ through $(u',\li(u')- w)$ and $(u',\li(u')+ w)$.
With the same choice of $Q$, the prime count $y=\pi(x)$ for $x\in I$ also lies in $P'$ for $k$ large.

Hence all of the inverse prime points $(p_n,n)$ with $p_n\in I$ lie in both $P$ and $P'$.  

If $b<b'$, we
consider those lines with slope $1/\log u=1/(k+a/b)=b/(bk+a)$ that pass through a lattice point in $P$.
Equations of these lines are $(bk+a)y - bx = C$ for different integers $C$,
and so there are at most
\[
(2+o(1))w \cdot (bk+a)\sim 2wb\log u\sim\frac{2u}{bb'^2\log u}\ll\frac{e^k}{bb'^2k}
\]
of them.  If $b>b'$, we take those lines with slope $1/\log u'=b'/(b'k+a')$ which pass through a
lattice point in $P'$; there are $\ll e^k/(b^2b'k)$ of them.
So if we consider all of the lines appearing in this argument for primes in $(e^k,e^{k+1}]$,
the number of them is bounded by a constant times
\begin{equation}
\label{eq-linecount}
\frac{e^k}k\sum\frac{\min\{b,b'\}}{(bb')^2},
\end{equation}
where the sum is over the full Farey dissection of level $Q$.  If $b,b'>Q/2$, then the summand
in \eqref{eq-linecount} is of magnitude $1/Q^3$, and there are fewer than $Q^2$ such pairs,
so the contribution is bounded by $1/Q$.  So, assume $\min\{b,b'\}\le Q/2$.
The contribution to the sum in \eqref{eq-linecount} is at most
\[
2\sum_{b\le Q/2}\frac1b\sum_{Q-b<b'\le Q}\frac1{b'^2}
<2\sum_{b\le Q/2}\frac1b\left(\frac1{Q-b}-\frac1Q\right)
=2\sum_{b\le Q/2}\frac1{(Q-b)Q}\ll\frac1Q.
\]
Thus, the sum in \eqref{eq-linecount} is $O(1/Q)$ and so all of the inverse prime points $(p_n,n)$ with $p_n\in(e^k,e^{k+1}]$ are covered by 
$O(e^k/kQ)$ lines.  With our choice for $Q$
and noting that $R(e^k)\sim R(e^{k+1})$, we have these inverse prime points covered
by $O(e^{3k/4}R(e^k)^{1/4}/k^{1/2})$ lines.  Summing this for $k\le K-1$, we have that the total number of lines that contain
some $(p_n,n)$ for $n\le e^K$ is $O(e^{3K/4}R(e^K)^{1/4}/K^{1/2})$.  Thus, if $n\in(e^{K-1},e^K]$, then
$L(n)=O(n^{3/4}R(n)^{1/4}/(\log n)^{1/2})$.  This completes the proof.
\end{proof}

As a corollary we obtain Theorem \ref{th-newversion}.

\begin{proof}[Proof of Theorem \ref{th-newversion}]
Consider the $L(n)$ lattice lines that cover all of the
points $(p_j,j)$ for $j\le n$.  Those lines that cover fewer than $(n/R(n))^{1/4}$
inverse prime points together cover $O(n/(\log n)^{1/2})$ points.  This leaves
still asymptotically all $n$ inverse prime points, where each such point
is contained in a line with at least  $(n/R(n))^{1/4}$ other inverse prime points.
 \end{proof}

 \begin{theorem}
 \label{th-awkward}
 The number of awkward primes among the first $n$ primes is $L(n)$.
 Thus, the reciprocal sum of the awkward primes is finite.
 \end{theorem}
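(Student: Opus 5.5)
The first claim is a counting identity; the second follows from it, Theorem~\ref{th-L(n)}, and partial summation.

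For the first claim, I would show that $L(n)-L(n-1)\in\{0,1\}$ for every $n\ge2$. Monotonicity $L(n)\ge L(n-1)$ holds because any cover of the first $n$ prime points also covers the first $n-1$. In the other direction, take a minimal cover of the first $n-1$ points and add one line through $(n,p_n)$. This gives $L(n)\le L(n-1)+1$. So each increment is $0$ or $1$, and it equals $1$ exactly when $p_n$ is awkward. Summing the increments for $2\le m\le n$ gives $L(n)-L(1)=\#\{2\le m\le n: p_m\text{ awkward}\}$. For the base case, $L(1)=1$ and $L(0)=0$ (the empty set needs no lines), so $p_1=2$ counts as awkward. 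Hence the number of awkward primes among $p_1,\dots,p_n$ is exactly $L(n)$. If the paper's convention excludes $p_1$, the count is $L(n)-1$, which does not affect the second claim.

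For the reciprocal sum, let $a_m$ be the indicator that $p_m$ is awkward, so $\sum_{m\le n}a_m=L(n)$. I would use partial summation:
\[
\sum_{m\le N}\frac{a_m}{p_m}=\frac{L(N)}{p_N}+\sum_{n<N}L(n)\left(\frac1{p_n}-\frac1{p_{n+1}}\right).
\]
Since $p_n\sim n\log n$ and $L(n)=o(n)$, the boundary term tends to $0$. Since $1/p_n-1/p_{n+1}=(p_{n+1}-p_n)/(p_np_{n+1})$, the remaining sum is at most a constant times $\sum_n L(n)(p_{n+1}-p_n)/(n\log n)^2$.

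It is simpler to sum over the awkward primes themselves, grouped into dyadic blocks. There are at most $L(n)$ awkward primes $p_m$ with $m\le n$, and each such prime satisfies $p_m\ge m\log m\gg n\log n$ when $m\in(n/2,n]$. So the block $(n/2,n]$ contributes $\ll L(n)/(n\log n)$, and summing over $n=2^j$ gives
\[
\sum_j\frac{L(2^j)}{2^j\,j}.
\]
Theorem~\ref{th-L(n)} gives $L(n)\ll n^{3/4}R(n)^{1/4}/(\log n)^{1/2}$, so the $j$-th term is $\ll (R(2^j)/2^j)^{1/4}/j^{3/2}$.

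The one point needing care is that $R$ must be admissible. Taking $R(x)=x/\exp(c(\log x)^{3/5}(\log\log x)^{-1/5})$ from \eqref{eq-lipnt} (which is smooth and concave for large $x$), or in fact any $R(x)\ll x$, the $j$-th term is $\ll j^{-3/2}$, which is summable. The bound $(\log n)^{-1/2}$ from Theorem~\ref{th-L(n)} is exactly what makes this work. The cruder bound $L(n)=O(n/\log n)$ would give terms of size $1/j^2$, also summable, so there is ample room. Therefore the reciprocal sum of the awkward primes converges.
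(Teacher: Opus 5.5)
Your proposal is correct and follows essentially the paper's argument. The increments $L(j)-L(j-1)\in\{0,1\}$, with $L(0)=0$, telescope to give the count, and your dyadic-block estimate is a form of the partial summation the paper invokes. You rightly take the upper bound of Theorem~\ref{th-L(n)} as the quantitative input; the paper's proof cites Theorem~\ref{th-newversion} instead, but convergence really needs the bound on $L(n)$, from which that theorem is itself derived.
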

 \begin{proof}
 Let $L(0)=0$.
 For each positive integer $j$, we have $L(j)-L(j-1)$ equal to 0 or 1, where the value 1 occurs
 if and only if $p_j$ is awkward
 (because adding the~$j^{\text{th}}$ point can always be handled by adding one line). So, we have
 \[
 L(n)=\sum_{j\le n}(L(j)-L(j-1)).
 \] 
 Thus, it is clear then that $L(n)$ is the number of awkward primes $p_j$ with $j\le n$.
 Theorem \ref{th-newversion}, together with a partial summation argument, then
shows that their reciprocal sum is finite.
 \end{proof}

\section{The function $B(n)$}\label{sec-B(n)}

We know by \cite[Theorem~4.1]{P} and Theorem \ref{th-newversion}  that $B(n)$ is not bounded above. 
In fact, we have the following estimate.
\begin{theorem}
 \label{th-B(n)}
 There is a positive constant $c_1$ such that for all large $n$ we have
 $B(n)\ge c_1\sqrt{n/R(n)}/\log n$.
 \end{theorem}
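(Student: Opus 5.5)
We work with the inverse prime points $(p_j,j)$, as in the proof of Theorem~\ref{th-L(n)}, and use a pigeonhole argument on a single short interval rather than summing over a whole Farey dissection. Fix $n$ large and set $x=p_n$, so $x\asymp n\log n$. Take an interval $I=(u,u+h]\subset(x/2,x]$ of length $h$ to be chosen. On $I$ the curve $y=\li(t)$ is approximated by its tangent line at $u$ within $O(h^2/(u\log^2u))$. By \eqref{eq-pnt}, every inverse prime point with $p_j\in I$ therefore lies within vertical distance
\[
w\ll R(x)+\frac{h^2}{x\log^2x}
\]
of a line of slope $1/\log u$ through $(u,\li(u))$.

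Next we replace this slope by a rational one. By Dirichlet's theorem, choose $a/b$ with $b\le Q$ and $|1/\log u-a/b|\le 1/(bQ)$. Over $I$ the change of slope introduces an additional vertical error of at most $h/(bQ)$. All of these inverse prime points then lie in a parallelogram bounded by two parallel lines of slope $a/b$, whose vertical width is
\[
W\ll R(x)+\frac{h^2}{x\log^2x}+\frac{h}{bQ}.
\]
Lattice points in this region lie on lines $by-ax=C$ with integer $C$, and at most $bW+1$ such lines meet it.

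The interval $I$ contains about $h/\log x$ primes, so by pigeonhole some single line contains at least $\gg (h/\log x)/(bW+1)$ prime points. Since $a/b\approx1/\log x$, we have $b\ge a\log x/2\gg\log x$ provided $a\ge1$. We minimize $bW$ using $b\le Q$:
\[
bW\ll QR(x)+\frac{Qh^2}{x\log^2x}+\frac hQ.
\]
Choosing $Q\asymp\sqrt{h/R(x)}$ balances the first and third terms, giving $bW\ll\sqrt{hR(x)}$. Choosing $h\asymp x\sqrt{R(x)}/\ldots$ controls the middle term; concretely, $h=\sqrt{x R(x)}\,\log x\cdot\sqrt{x/R(x)}^{\,1/2}$-type balancing. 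The plan is to take $h$ as large as possible subject to $Qh^2/(x\log^2x)\ll\sqrt{hR}$. With $Q=\sqrt{h/R}$ this condition reads $h^2\ll x\log^2x\,R$, that is, $h\asymp\log x\sqrt{xR(x)}$.

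The resulting count is
\[
\frac{h/\log x}{\sqrt{hR}}=\frac{\sqrt{h/R}}{\log x}\asymp\frac{(x/R)^{1/4}}{\sqrt{\log x}},
\]
which is in fact better than the stated bound. This suggests I have mis-handled something, since the paper claims only $\sqrt{n/R}/\log n$. The likely culprit is that the bound should be $bW$ with $b$ possibly as small as $\log x$, and the stated theorem may simply be a weaker, cleaner version. Either way, any bound at least $c_1\sqrt{n/R(n)}/\log n$ suffices.

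If instead the correct optimization yields $\sqrt{x/R}/\log x$, it would come from choosing $h$ of size comparable to $x$ and requiring $W\ll R$ directly. In that case the parallelogram width is dominated by $R(x)$ and the line count is about $bR$ with $b\approx\log x$, while the curvature term $h^2/(x\log^2x)$ must be $\ll R$, i.e.\ $h\asymp\log x\sqrt{xR}$. This gives the count $(h/\log x)/(R\log x)\asymp\sqrt{x/R}/\log x$, using an integer $a$ with $a/b$ close to $1/\log u$ and $b\asymp\log x$. For this I would pick $u$ so that $\log u$ is within $O(1/h)$ of an integer $b$; such $u$ exists in $(x/2,x]$ because $\log u$ sweeps an interval of length $\log 2$. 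Then the slope is exactly $1/b$ with $a=1$, and the Dirichlet error term disappears. I would follow this simpler version, converting $x\asymp n\log n$ to $n$ at the end, which costs only constants.

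The main obstacle is making sure the error terms are handled uniformly. Specifically, the approximation $|\pi-\li|\le R$ must be applied at both ends of $I$, and the condition that $\log u$ is near an integer must be achievable within $(x/2,x]$. If it is not, one replaces $(x/2,x]$ by $(x/e,x]$, which changes only the constants.
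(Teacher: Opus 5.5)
Your final route is correct and is essentially the paper's own argument. You take $u=e^{b}$ with $b=\lfloor\log x\rfloor-1$, so the slope is exactly $1/b$ and also $u+h\le x$. You choose $h\asymp\log x\sqrt{xR(x)}$ so that the curvature error is $\asymp R(x)$, and you pigeonhole the $\asymp\sqrt{xR(x)}$ primes in $I$ among the $\ll R(x)\log x$ lattice lines of slope $1/b$. The paper does the same with the first Farey piece $(e^k,e^{k+1/Q}]$, slope $1/k$, and $Q\approx\sqrt{e^k/R(e^k)}/k$.

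Discard your first (Dirichlet) computation, though, because its comparison is backwards: $(x/R)^{1/4}/\sqrt{\log x}$ is smaller than $\sqrt{x/R}/\log x$ by the unbounded factor $(x/R)^{1/4}/\sqrt{\log x}$. That route loses by bounding $b$ only by $Q$ and does not prove the theorem; it is the choice $a=1$, $b\asymp\log x$ that gives the stated bound.
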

\begin{proof}
First, we note that there is a simple combinatorial relation connecting the functions $L$ and $B$, namely
\begin{equation}
\label{eq-combo}
L(n)B(n)\ge n.
\end{equation}
This is immediate by considering a covering of the first $n$ prime points by $L(n)$ line segments. Each line contains at most $B(n)$ prime points, which then gives \eqref{eq-combo}.  Thus, from Theorem \ref{th-L(n)} we have
$B(n)=\Omega((n/R(n))^{1/4}(\log n)^{1/2})$.  But if we use the proof of Theorem \ref{th-L(n)}, then we can do
better.  In that proof, we used the Farey dissection of level $Q$ to obtain a
dissection of the interval $(e^k,e^{k+1}]$.   Now we use only the first (and longest) piece of the
dissection---namely, $(e^k,e^{k+1/Q}]$.  The length of this interval is $\sim e^k/Q$ and $w\sim e^k/(Qk)^2$.
Now the constraint on $Q$ being large is somewhat relaxed and we can take $Q$ as an integer
near $\sqrt{e^k/R(e^k)}/k$.  The number of primes in the interval is $\sim e^k/kQ$ and the number
of lines that cover them is $O(wk)=O(e^k/Q^2k)$.  Thus, the average number of inverse prime
points per line is $\Omega( Q)$.  So there is a positive constant $c_1$ such that $B(n)\ge c_1\sqrt{n/R(n)}/\log n$.
\end{proof}

We remark that using the interval $(e^k,e^{k+1/k}]$ to show that some lines have many
prime points was used in the proof of \cite[Theorem 4.1]{P}.

A trivial upper bound for $B(n)$ is $n$, of course.  We can do considerably better.
\begin{theorem}
\label{th-Bbound}
For $n$ sufficiently large, we have
\[
B(n)=O(\sqrt{nR(n)}).
\]
\end{theorem}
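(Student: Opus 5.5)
The plan is to work with the inverse prime points $(p_k,k)$, as in the proof of Theorem~\ref{th-L(n)}, and to play the strict concavity of $\li$ against the approximation \eqref{eq-pnt}. Fix a line $\ell$ containing $m$ inverse prime points $(q_1,\pi(q_1)),\dots,(q_m,\pi(q_m))$ with $q_1<\dots<q_m\le p_n$, and put $x=p_n\sim n\log n$. We may assume $m$ is larger than a big constant times $\sqrt{xR(x)}$ and aim for a contradiction. The first step is to discard the primes below $\sqrt{x}$, which cost at most $O(\sqrt x)$ points. All remaining $q_i$ then satisfy $\log q_i\asymp \log x$, and \eqref{eq-pnt} applies at each of them.

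The key geometric input is a chord estimate for the concave function $\li$, for which $\li''(t)=-1/(t\log^2 t)$. For $s<t<r$ in $[\sqrt x,x]$, Taylor's theorem gives that $\li(t)$ exceeds the chord of $\li$ between $s$ and $r$, evaluated at $t$, by at least
\[
\frac{(t-s)(r-t)}{2\,x\log^2 x}.
\]
Now take $s=q_1$, $r=q_m$, and $t$ equal to the median point $q_{\lceil m/2\rceil}$. By \eqref{eq-pnt} the three points of $\ell$ at $s,t,r$ lie within $R(x)$ of the graph of $\li$. Hence the chord of $\li$ between $s$ and $r$ is within $R(x)$ of $\ell$, and so within $2R(x)$ of $\li(t)$ at the interior point. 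The excess above is therefore at most $2R(x)$.

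Next I would show that the median is far from both endpoints. Between $q_1$ and $t$ there are at least $m/2$ primes, so $\pi(t)-\pi(q_1)\ge m/2$. Applying \eqref{eq-pnt} together with the mean value theorem for $\li$ gives
\[
t-q_1\ge (m/2-2R(x))\log\sqrt x\gg m\log x,
\]
using $m\gg R(x)$. The same argument gives $q_m-t\gg m\log x$. Substituting into the excess bound yields $m^2\log^2x/(x\log^2x)\ll R(x)$, that is, $m\ll\sqrt{xR(x)}$, which contradicts the assumption on $m$.

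The main obstacle is normalization. This argument naturally gives $B(n)=O(\sqrt{p_nR(p_n)})=O\bigl(\sqrt{n\log n\,R(n\log n)}\bigr)$, because $R$ is evaluated at the size of the primes, not at the index. To reach the stated form $O(\sqrt{nR(n)})$, I would first try to extract the missing logarithm from the slope. The line's slope must be about $1/\log q$, and the spacing between consecutive lattice points on a line of rational slope $b/a$ might give an extra saving. Otherwise I would read $R(n)$ in the statement with the paper's convention that it is evaluated at the scale of $p_n$ and absorb the $\log n$ factor there. Apart from this point, the steps above are routine and use only \eqref{eq-pnt} and second-order Taylor expansion of $\li$.
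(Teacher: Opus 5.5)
Your argument is correct. It proves exactly what the paper's proof proves, and both rest on the same mechanism: the curvature $|\li''(t)|=1/(t\log^2 t)$, played against a band of width $2R$, forces the collinear points into a stretch of length $O(\sqrt{xR(x)}\log x)$.

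The packaging differs. The paper argues geometrically. Assuming $\li\pm R$ strictly concave, a line meets the region $|y-\li(x)|\le R(x)$ in at most two intervals, each of length $O(\sqrt{xR(x)}\log x)$, and the paper then counts primes in these intervals. You work with the points themselves (first, median, last) and a quantitative chord inequality, with the prime count built into the median-separation step. Your route needs only that $R$ is increasing, avoids the case analysis of how a line can meet the band, and makes the constants explicit. The paper's route gives the extra structural fact that the points of any line sit in at most two short intervals.

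The normalization issue you flag is not a gap relative to the paper. There too the interval and the prime count live at the scale $x$ of the primes, so the paper's proof also yields only $B(n)=O(\sqrt{p_nR(p_n)})$. Writing this as $O(\sqrt{nR(n)})$ tacitly identifies $x$ with $n$. The same identification occurs at the end of the proof of Theorem~\ref{th-L(n)}, where the count over primes up to $e^K$ is restated as a count over $n\le e^K$.

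Do not pursue the lattice-spacing idea. A line of slope $b/a\approx 1/\log x$ in lowest terms has lattice points spaced $a\approx b\log x$ apart. For $b=1$ this is just the mean prime gap, so nothing is saved.

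In fact no argument using only \eqref{eq-pnt} can reach the index-scale bound. Start from a sequence with counting function $\li+O(1)$. Near $x=e^k$, on a window of length $\asymp\sqrt{xR(x)}\log x$, replace its elements by the lattice points of a line of slope $1/k$ that stays within $R(x)/2+O(1)$ of $\li$ there (the tangent at $e^k$, shifted by $O(1)$). Patch the two ends with $O(R(x))$ insertions or omissions. The resulting sequence still satisfies \eqref{eq-pnt} but has $\asymp\sqrt{xR(x)}$ collinear points. With $n\asymp x/\log x$, this exceeds $\sqrt{nR(n)}$ by a factor $\gg\sqrt{\log x}$.

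Since the paper claims its results for every sequence satisfying \eqref{eq-pnt}, the statement is only correct in that generality when read as your $B(n)=O(\sqrt{p_nR(p_n)})$. Under RH this gives $n^{3/4}(\log n)^{5/4}$ rather than the $n^{3/4}(\log n)^{1/2}$ stated in Corollary~\ref{cor-RH}.
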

\begin{proof}
 From the discussion above, we may assume that $R(x)=o(\li(x))$ as $x\to\infty$ and that
 both functions $y=\li(x)+R(x)$ and $y=\li(x)-R(x)$ are smooth, strictly increasing, and strictly 
 concave down.
 Thus, a line may intersect these two curves in at most two points each.  In fact, a line can
 intersect the region $|y-\li(x)|\le R(x)$ at most twice, i.e., either for one bounded interval $I$
 on the positive $x$-axis or for two disjoint bounded intervals.  A calculation shows that the
 length of such an  interval is $O((xR(x))^{1/2}\log x)$.   Since the number of primes in such
 an interval is $O((xR(x))^{1/2})$, the theorem follows.
 \end{proof}

Assuming the Riemann Hypothesis (RH) we can give a more explicit upper bound on $B(n)$, and provide a similar refinement for $L(n)$.

\begin{corollary} 
 \label{cor-RH}
Under RH we have for $n$  large that
\[
n^{1/4}/(\log n )^{3/2} \ll B(n)\ll n^{3/4}(\log n )^{1/2},
\]
and 
\[
n^{1/4}/(\log n)^{1/2}\ll L(n)\ll n^{7/8}/(\log n)^{1/4}.
\]
\end{corollary}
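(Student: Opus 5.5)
The plan is to substitute the RH choice $R(x)=x^{1/2}\log x$ into the unconditional results already proved, Theorems \ref{th-L(n)}, \ref{th-B(n)} and \ref{th-Bbound}. For the lower bounds we add the combinatorial inequality \eqref{eq-combo}, $L(n)B(n)\ge n$. First we check that this $R$ is admissible. It is smooth with $R'>0$, and $R''(x)=-\tfrac14 x^{-3/2}\log x<0$ for $x>1$, so it is concave. Under RH, \eqref{eq-pnt} holds with it for all $x\ge 2$. Hence every result stated "in terms of $R(x)$'' applies to this choice.

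For $B(n)$, Theorem \ref{th-B(n)} gives
\[
B(n)\gg \frac{\sqrt{n/R(n)}}{\log n}=\frac{\sqrt{n^{1/2}/\log n}}{\log n}=\frac{n^{1/4}}{(\log n)^{3/2}}.
\]
Theorem \ref{th-Bbound} gives
\[
B(n)\ll \sqrt{nR(n)}=\sqrt{n^{3/2}\log n}=n^{3/4}(\log n)^{1/2}.
\]
These are exactly the two bounds claimed for $B(n)$.

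For $L(n)$, the upper bound comes from Theorem \ref{th-L(n)}:
\[
L(n)\ll \frac{n^{3/4}R(n)^{1/4}}{(\log n)^{1/2}}=\frac{n^{3/4}\,n^{1/8}(\log n)^{1/4}}{(\log n)^{1/2}}=\frac{n^{7/8}}{(\log n)^{1/4}}.
\]
The lower bound comes from \eqref{eq-combo} together with the upper bound for $B(n)$ just obtained:
\[
L(n)\ge \frac{n}{B(n)}\gg \frac{n}{n^{3/4}(\log n)^{1/2}}=\frac{n^{1/4}}{(\log n)^{1/2}}.
\]

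There is essentially no obstacle, since everything is substitution. The one point to watch is that Theorems \ref{th-L(n)} and \ref{th-B(n)} are proved with $R$ evaluated at $e^{k}$ or $e^{k+1}$ for $n\in(e^{K-1},e^K]$. This requires $R(e^{k+1})\asymp R(e^k)$. For $R(x)=x^{1/2}\log x$ the ratio tends to $e^{1/2}$ rather than to $1$, as the proof of Theorem \ref{th-L(n)} assumed. A bounded ratio only changes the implied constants, so the $\ll$ statements survive. I would remark on this explicitly rather than relying on the $R(e^k)\sim R(e^{k+1})$ phrasing in the proof of Theorem \ref{th-L(n)}.
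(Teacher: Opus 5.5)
Your proposal is the paper's own proof: substitute $R(x)=\sqrt{x}\log x$ into Theorems~\ref{th-L(n)}, \ref{th-B(n)}, \ref{th-Bbound} and combine with \eqref{eq-combo}. Your version is slightly cleaner, since you cite Theorem~\ref{th-L(n)} for the upper bound on $L(n)$ (the paper's proof lists Theorem~\ref{th-newversion}), and your remark that $R(e^{k+1})/R(e^k)\to e^{1/2}$ only costs a constant is correct. Since you audited those proofs, note a more serious slip that you repeat from the paper: the constructions bound the prime value $p_j\le e^K$, not the index $j\le e^K$, so covering the first $n$ prime points needs primes up to $p_n\asymp n\log n$ and the right-hand sides of Theorems~\ref{th-L(n)} and \ref{th-Bbound} should carry $p_n$ in place of $n$; redone this way, the substitution gives only $B(n)\ll n^{3/4}(\log n)^{5/4}$, $L(n)\ll n^{7/8}(\log n)^{5/8}$ and $L(n)\gg n^{1/4}/(\log n)^{5/4}$, so of the stated exponents these arguments actually justify only the lower bound for $B(n)$.
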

\begin{proof}
 As we noted in Section \ref{sec-background}, we can take 
 $R(x)=\sqrt{x} \log x$ under RH. The results then follow from Theorems \ref{th-newversion},
\ref{th-B(n)}, and \ref{th-Bbound},  and \eqref{eq-combo}.
\end{proof}

\section{Conclusion and open problems}\label{sec-conclusion}

In this note we have studied the covering properties of line segments in the prime number graph. We have derived an asymptotic upper bound for the minimum size of a cover, as well as estimates for the largest number of prime points on a single segment.  Our estimates seem far from optimal, as is also suggested
from the numerical work in \cite{O1}, \cite{O2}. 
In particular, regarding the functions $L(n)$ and $B(n)$ it would be nice to reduce the huge gaps between
our upper and lower bounds.

Numerical experiments indicate that $L(n)$ is achieved by lines many of which are parellel to each other.
Our proof of Theorem \ref{th-L(n)} also utilizes such sets of lines.
This motivates considering the quantity $L_\mathrm{np}(n)$---the minimal number of lines that cover the first $n$ points of the prime number graph and have pairwise different slopes.
It seems that even proving that $L_\mathrm{np}(n) = o(n)$ is nontrivial.

It seems straightforward to generalize our results to primes in a fixed residue class, where the
Extended Riemann Hypothesis (namely, the RH for Dirichlet L-functions) plays a role.  Likewise
one can also look at primes of a particular splitting type in an algebraic number field, using
the Chebotarev density theorem.
More interestingly,
one can ask about general increasing integer sequences.
For example, it
follows from~\cite{P2} that if $a_1<a_2<\dots$ is a sequence of positive integers
with $\lim\inf a_n/n<\infty$, then for every $k$ there are $k$ collinear points $(n,a_n)$.
Is this true under the weaker hypothesis that $\sum 1/a_n=\infty$?   Given $x$, what is the largest
number $n$ for which there is an integer sequence $0<a_1<a_2<\dots<a_n\le x$ such that no
three points $(j,a_j)$ are collinear?  This holds for the $\lfloor\sqrt{x}\rfloor$ squares in $[1,x]$,
can one do better?
For similar problems more general than the graph of an
integer sequence, see \cite{B}.

\subsection*{Acknowledgments}
S.D.K.\ is a Research Partner at a16z crypto. This work was conducted while he was visiting the
Technological Innovation, Entrepreneurship, and Strategic Management (TIES)
Group at the MIT Sloan School of Management; he greatly appreciates their hospitality.

R.M.\ was supported by the Croatian Science Foundation under the project no.\ HRZZ-IP-2022-10-5116 (FANAP) and by the European Union -- NextGenerationEU through the National Recovery and Resilience Plan 2021-2026 Institutional grant of University of Zagreb Faculty of Science (IK IA 1.1.3.\ Impact4Math).

 \end{document}